\newtheorem{theorem}{Theorem}[section]
\newtheorem{lemma}[theorem]{Lemma}
\newtheorem{remark}[theorem]{Remark}
\numberwithin{equation}{section}
\let\mathcal=\mathscr
\begin{document}
%\centerline{\textit{Second version: November 29, 2017}}
\title[the strong convergence of a perturbed iterative algorithm]
{On the strong convergence of a perturbed algorithm to the unique solution of a variational inequality problem}%
\author{Ramzi May}%
\address{Mathematics Department, College of Science, King Faisal University, P.O. 380, Ahsaa 31982, Kingdom of Saudi Arabia}
\email{rmay@kfu.edu.sa}
\author{Zahrah BinAli}%
\address{Department of quantitative methods, College of Business, King Faisal University, P.O. 380, Al Ahsaa 31982, Saudi Arabia.}
\email{zbinali@kfu.edu.sa}
%\thanks{Acknowledgement: The authors are grateful to the Deanship of Scientific Research at King Faisal University for
%financially and morally supporting this work under Project ​}
\subjclass{47H09;47H05;47H06}
\keywords{Hilbert space; Variational inequality problem; Maximal and monotone operators;
Fixed points of nonexpansive mappings; Projection operator; Iterative algorithm}

\vskip 0.2cm
\date{December 22, 2021}
\dedicatory{}
% ----------------------------------------------------------------
\begin{abstract}
Let $Q$ be a nonempty closed and convex subset of a real Hilbert space $%
\mathcal{H}$. $T:Q\rightarrow Q$ is a nonexpansive mapping which has a least
one fixed point. $f:Q\rightarrow \mathcal{H}$ is a Lipschitzian function, and $%
F:Q\rightarrow \mathcal{H}$ is a Lipschitzian and strongly monotone mapping.
We prove, under appropriate conditions on the functions $f$ and $F$, the
control real sequences $\{\alpha _{n}\}$ and $\{\beta _{n}\},$ and the error
term $\{e_{n}\},$ that for any starting point $x_{0}$ in $Q,$ the sequence $%
\{x_{n}\}$ generated by the perturbed iterative process
\[
x_{n+1}=\beta _{n}x_{n}+(1-\beta _{n})P_{Q}\left( \alpha
_{n}f(x_{n})+(I-\alpha _{n}F)Tx_{n}+e_{n}\right)
\]%
converges strongly to the unique solution of the variational inequality
problem%
\[
\text{Find }q\in C\text{ such that }\langle F(q)-f(q),x-q\rangle \geq 0\text{
for all }x\in C
\]%
where $C=F_{ix}(T)$ is the set of fixed points of $T.$ Our main result unifies and extends many
well-known previous results.
\end{abstract}
\maketitle
% ----------------------------------------------------------------

\section{Introduction}

Throughout this paper, $\mathcal{H}$ is a real Hilbert space with inner product $\langle
.,.\rangle$ and associated norm $\left\Vert .\right\Vert ,$ $Q$ a nonempty
closed and convex subset of $\mathcal{H},$ $T:Q\rightarrow Q$ is a nonexpansive
mapping (i.e., $\left\Vert Tx-Ty\right\Vert \leq\left\Vert x-y\right\Vert $
for all $x,y\in Q$) such that $C:=F_{ix}(T)=\{x\in Q:Tx=x\}$ is nonempty,
$f:Q\rightarrow\mathcal{H}$ is a Lipschitzian mapping with coefficient
$\alpha\geq0$ (i.e., $\left\Vert f(x)-f(y)\right\Vert \leq\alpha \left\Vert x-y\right\Vert $
for all $x,y\in Q$), and $F:Q\rightarrow\mathcal{H}$ is a Lipschitizian mapping with
coefficient $\kappa>0.$ We assume moreover that $F$ is strongly monotone with
coefficient $\eta>0,$ which means that
\[
\langle F(x)-F(y),x-y\rangle\geq\eta\left\Vert x-y\right\Vert ^{2}\text{ for
all }x,y\in Q.
\]
We assume also that $\alpha<\eta.$ this assumption ensures that the
operator $g:=F-f$ is strongly monotone with coefficient $\eta-\alpha$; which implies that
the variational inequality problem
\begin{equation}
\text{Find }q\in C\text{ such that }\langle F(q)-f(q),x-q\rangle\geq0\text{
for all }x\in C,\tag{VIP}%
\end{equation}
has a unique solution that we denote by $q^{\ast}.$

In the present work, we are concerned with the construction of a general
iterative algorithm that generates sequences converging strongly to $q^{\ast}.$
Let us first recall some previous results related to this subject. In the
particular case $Q=\mathcal{H},$ $f\equiv u$ a constant, and $F=I$ the
identity mapping from $\mathcal{H}$ into itself, Halpern \cite{Hal} introduced the iterative process%
\begin{equation}
\left\{
\begin{array}
[l]{l}%
x_{0}\in\mathcal{H}\\
x_{n+1}=\alpha_{n}u+(1-\alpha_{n})Tx_{n},
\end{array}
\right.  \label{Hal1}%
\end{equation}
with $\{\alpha_{n}\}\in\lbrack0,1].$ He established that if $\alpha_{n}%
=\frac{1}{n^{\theta}}$ with $\theta\in]0,1[$ then the
generated sequence $\{x_{n}\}$ converges strongly to $q^{\ast}$ which is in
this case equal to $P_{C}(u)$ where $P_{C}:\mathcal{H}\rightarrow C$ is the
metric projection from $\mathcal{H}$ onto the closed and convex subset
$C=F_{ix}(T).$ He also proved that the conditions

\begin{enumerate}
\item[(C1)] $\lim_{n\rightarrow+\infty}\alpha_{n}=0,$

\item[(C2)] $\sum_{n=0}^{+\infty}\alpha_{n}=+\infty,$
\end{enumerate}

are necessary for the strong convergence of the algorithm (\ref{Hal1}).
In 1977, Lions \cite{Lio} extended the result of Halpern. In fact, he proved
the strong convergence of sequences $\{x_{n}\}$ generated by process
(\ref{Hal1}) to $q^{\ast}$ provided the sequence $\{\alpha_{n}\}$ satisfies
the necessary conditions (C1)-(C2) and the supplementary condition

\begin{enumerate}
\item[(C3)] $\lim_{n\rightarrow+\infty}\frac{\alpha_{n+1}-\alpha_{n}}%
{\alpha_{n}^{2}}=0.$
\end{enumerate}

In 2000, Moudafi \cite{Mou} considered the case when $Q=\mathcal{H},$
$f:\mathcal{H}\rightarrow\mathcal{H}$ is a contraction with coefficient
$\alpha\in\lbrack0,1[,$ $F=I$ the identity mapping from $\mathcal{H}$ into
itself. He introduced the algorithm%
\begin{equation}
\left\{
\begin{array}
[l]{l}%
x_{0}\in\mathcal{H}\\
x_{n+1}=\alpha_{n}f(x_{n})+(1-\alpha_{n})Tx_{n},
\end{array}
\right.  \label{Mou}%
\end{equation}
where $\{\alpha_{n}\}\in]0,1].$ He established, under the conditions (C1),
(C2) and

\begin{enumerate}
\item[(C4)] $\lim_{n\rightarrow+\infty}\frac{\alpha_{n+1}-\alpha_{n}}%
{\alpha_{n+1}\alpha_{n}}=0,$
\end{enumerate}

\par\noindent the strong convergence of any sequence generated by this algorithm to $q^{\ast
}$ which in this case is equal to the unique fixed point of the contraction
mapping $P_{C}\circ f.$

In 2004, Xu \cite{Xu1} improved Moudafi result; in fact, he followed a new
approach to prove the strong convergence property of the algorithm (\ref{Mou})
provided the sequence $\{\alpha_{n}\}$ satisfies the conditions (C1), (C2) and

\begin{enumerate}
\item[(C5)] $\lim_{n\rightarrow+\infty}\frac{\alpha_{n+1}-\alpha_{n}}%
{\alpha_{n}}=0$ or $\sum_{n=0}^{+\infty}\left\vert \alpha_{n+1}-\alpha
_{n}\right\vert <+\infty.$
\end{enumerate}

Xu \cite{Xu2} has also considered the case when $Q=\mathcal{H},$ $f=u$ a
constant and $F=A$ a $\eta-$ strongly positive, self adjoint, and  bounded linear
operator from $\mathcal{H}$ to $\mathcal{H}.$ He established the strong convergence
of the algorithm%
\[
\left\{
\begin{array}
[l]{l}%
x_{0}\in\mathcal{H}\\
x_{n+1}=\alpha_{n}u+(I-\alpha_{n}A)Tx_{n}%
\end{array}
\right.
\]
to the unique solution $q^{\ast}$ of (VIP) provided the real sequence
$\{\alpha_{n}\}$ satisfies the conditions (C1), (C2) and (C5). Let us notice
here that, in this case, $q^{\ast}$ is the unique minimizer of the strongly
quadratic convex function $J(x)=\frac{1}{2}\langle Ax,x\rangle-\langle u,x\rangle$
over the closed and convex subset $C=F_{ix}(T).$

Later in 2006, Mariano and Xu \cite{MXu} established that the previous strong
convergence result remains true in the more general case when $f:\mathcal{H}%
\rightarrow\mathcal{H}$ is a Lipschitzian mapping with constant $\alpha$
strictly less than $\eta$.

On the other hand, Yamada \cite{Yam} studied the particular case when
$Q=\mathcal{H},$ $f\equiv0.$ He proved that if the sequence $\{\alpha_{n}\}$
satisfies the conditions (C1), (C2) and (C3) then for every starting point
$x_{0}\in\mathcal{H}$ the sequence $\{x_{n}\}$\ generated by the iterative
process
\[
x_{n+1}=(I-\alpha_{n}F)Tx_{n}%
\]
converges strongly to $q^{\ast}.$

In 2010, Tiang \cite{Tia}, by combining the iterative method of Yamada and
the method of Mariano and Xu, had introduced the general algorithm%
\[
\left\{
\begin{array}
[l]{l}%
x_{0}\in\mathcal{H}\\
x_{n+1}=\alpha_{n}f(x_{n})+(I-\alpha_{n}F)Tx_{n}.
\end{array}
\right.
\]
He established the strong convergence of this algorithm to $q^{\ast}$ provided
the real sequence $\{\alpha_{n}\}$ satisfies the conditions (C1), (C2)
and (C5).

Later, in 2011, Ceng, Ansari and Yao \cite{CAY} extended Tiang's result to the case
where $Q$ is not necessary equal to the whole space $\mathcal{H}.$ Precisely,
they proved that if the sequence $\{\alpha_{n}\}$ satisfies the conditions
(C1), (C2) and (C5), then for any starting point $x_{0}$ in $Q$ the sequence
$\{x_{n}\}$ defined by the scheme%
\[
x_{n+1}=P_{Q}\left(  \alpha_{n}f(x_{n})+(I-\alpha_{n}F)Tx_{n}\right)
\]
converges strongly to $q^{\ast}.$

In this paper, inspired by the previous works and the papers
\cite{YLC} and \cite{BCM}, we introduce the following hybrid and perturbed
algorithm:%
\begin{equation}
\left\{
\begin{array}
[l]{l}%
x_{0}\in Q\\
x_{n+1}=\beta_{n}x_{n}+(1-\beta_{n})P_{Q}\left(  \alpha_{n}f(x_{n}%
)+(I-\alpha_{n}F)Tx_{n}+e_{n}\right)  ,
\end{array}
\right.  \tag{HPA}%
\end{equation}
where $\{\alpha_{n}\}$ and $\{\beta_{n}\}$ are two real sequences in $[0,1]$
and $\{e_{n}\}$ is a sequence in $\mathcal{H}$ representing the perturbation
term. Roughly speaking, we will prove that any
sequence $\{x_{n}\}$ generated by the algorithm (HPA) converges strongly to
$q^{\ast}$ provided that the sequence $\{\alpha_{n}\}$
satisfies only the necessary conditions (C1) and (C2), the sequence
$\{\beta_{n}\}$ is not too close to $0$ or $1,$ and the perturbation term
$\{e_{n}\}$ is relatively small with respect to $\{\alpha_{n}\}$.

The paper is organized as follows. In the next section, we recall some preliminary lemmas that will be used frequently in the proof of the results of the paper. The section 3 is devoted to the study of the convergence of an implicit version of the algorithm (HPA). The strong convergence of the iterative algorithm (HPA) will be investigated in Section4. The Last section will be devoted to the study of the limit case where the strong monotonicity coefficient of $F$ is equal to the Lipschitzian coefficient of $f$.

\section{Preliminaries}

In this section, we recall some classical results that will be useful in the
proof of the main theorems of the paper.

The first result is a simple but powerful lemma proved by Xu in \cite{Xu}.
This lemma is a generalization of a result due to Bertsekas (see \cite[Lemma 1.5.1]{Ber}).

\begin{lemma}
\label{Lem1}let $\{a_{n}\}$ be a sequence of nonnegative real numbers such
that:%
\[
a_{n+1}\leq(1-\gamma_{n})a_{n}+\gamma_{n}r_{n}+\delta_{n},\text{ }n\geq0,
\]
where $\{\gamma_{n}\}\in\lbrack0,1]$ and $\{r_{n}\}$and $\{\delta_{n}\}$ are
two real sequences such that

\begin{enumerate}
\item[(1)] $\sum_{n=0}^{+\infty}\gamma_{n}=+\infty;$

\item[(2)] $\sum_{n=0}^{+\infty}\left\vert \delta_{n}\right\vert <+\infty; $

\item[(3)] $\lim\sup_{n\rightarrow+\infty}r_{n}\leq0.$
\end{enumerate}
Then the sequence $\{a_{n}\}$ converges to $0.$
\end{lemma}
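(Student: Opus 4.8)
The plan is to reduce the whole statement to one elementary fact: since $\gamma_{n}\in[0,1]$ and $\sum_{n}\gamma_{n}=+\infty$, for every fixed $N$ the products $\prod_{k=N}^{n}(1-\gamma_{k})$ tend to $0$ as $n\to+\infty$. This is immediate from $1-x\leq e^{-x}$, which gives $\prod_{k=N}^{n}(1-\gamma_{k})\leq\exp\!\big(-\sum_{k=N}^{n}\gamma_{k}\big)$, and the right-hand side vanishes by hypothesis (1). Because the $a_{n}$ are nonnegative, it then suffices to prove $\limsup_{n}a_{n}\leq 0$.

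To control the two error sources at once, I would fix $\varepsilon>0$ and use conditions (2) and (3) to choose a single threshold $N_{0}$ so large that simultaneously $r_{n}\leq\varepsilon$ for all $n\geq N_{0}$ and $\sum_{k\geq N_{0}}|\delta_{k}|<\varepsilon$. The key manipulation is the shift $b_{n}:=a_{n}-\varepsilon$: for $n\geq N_{0}$ the hypothesis together with $r_{n}\leq\varepsilon$ gives $a_{n+1}\leq(1-\gamma_{n})a_{n}+\gamma_{n}\varepsilon+\delta_{n}$, and since $\gamma_{n}\varepsilon-\varepsilon=-(1-\gamma_{n})\varepsilon$ this rearranges to the clean recursion $b_{n+1}\leq(1-\gamma_{n})b_{n}+\delta_{n}$, in which the $r_{n}$-term has been completely absorbed into the geometric decay.

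Iterating this telescoping inequality from $N_{0}$ to $n$ yields
\[
b_{n+1}\leq\Big(\prod_{k=N_{0}}^{n}(1-\gamma_{k})\Big)b_{N_{0}}+\sum_{k=N_{0}}^{n}\Big(\prod_{j=k+1}^{n}(1-\gamma_{j})\Big)\delta_{k}.
\]
Bounding each partial product $\prod_{j=k+1}^{n}(1-\gamma_{j})$ by $1$ (every factor lies in $[0,1]$) and each $\delta_{k}$ by $|\delta_{k}|$, I obtain $b_{n+1}\leq\big(\prod_{k=N_{0}}^{n}(1-\gamma_{k})\big)b_{N_{0}}+\sum_{k\geq N_{0}}|\delta_{k}|$. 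Letting $n\to+\infty$, the first term vanishes by the preliminary fact while the tail sum is $<\varepsilon$, so $\limsup_{n}b_{n}\leq\varepsilon$, i.e. $\limsup_{n}a_{n}\leq 2\varepsilon$. As $\varepsilon>0$ is arbitrary, $\limsup_{n}a_{n}\leq 0$, and nonnegativity gives $a_{n}\to 0$.

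The only genuine obstacle is coordinating the two perturbations: the term $\gamma_{n}r_{n}$ is controlled only asymptotically, through $\limsup r_{n}\leq 0$, whereas $\delta_{n}$ is controlled through summability. Both must be handled past the \emph{same} threshold $N_{0}$, and the substitution $b_{n}=a_{n}-\varepsilon$ is precisely what lets the $r_{n}$-term be reabsorbed into the contraction factor rather than accumulating along the iteration; once that is arranged, the telescoped estimate and the product-to-zero lemma finish the argument routinely.
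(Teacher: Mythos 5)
Your proof is correct: the shift $b_{n}=a_{n}-\varepsilon$ absorbs the $\gamma_{n}r_{n}$ term into the contraction factor, the telescoped product estimate handles the summable perturbation $\delta_{n}$, and the divergence of $\sum\gamma_{n}$ kills the initial term via $1-x\leq e^{-x}$. The paper itself gives no proof of this lemma --- it is quoted from Xu \cite{Xu} --- and your argument is essentially the standard one found there, so there is nothing to flag.
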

The second result is the following lemma due to Suzuki \cite{Suz}
\begin{lemma}
\label{Lem2}Let $\{z_{n}\}$ and $\{w_{n}\}$ be two bounded sequence in a
Banach space $E$ and let $\{\beta_{n}\}$ be a sequence in $[0,1]$ with
\[
0<\lim\inf_{n\rightarrow+\infty}\beta_{n}\leq\lim\sup_{n\rightarrow+\infty
}\beta_{n}<1.
\]
Suppose that
\[
z_{n+1}=\beta_{n}z_{n}+(1-\beta_{n})w_{n},\text{ }n\geq0
\]
and
\[
\lim\sup_{n\rightarrow+\infty}\left(  \left\Vert w_{n+1}-w_{n}\right\Vert
-\left\Vert z_{n+1}-z_{n}\right\Vert \right)  \leq0.
\]
Then
\[
\lim_{n\rightarrow+\infty}\left\Vert z_{n}-w_{n}\right\Vert =0.
\]

\end{lemma}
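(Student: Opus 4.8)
The plan is to reduce everything to the behaviour of the single sequence $t_n:=w_n-z_n$ and to show that $\|t_n\|\to0$. First I would record two consequences of the recursion $z_{n+1}=\beta_n z_n+(1-\beta_n)w_n$. Subtracting $z_n$ gives the identity
\[
z_{n+1}-z_n=(1-\beta_n)t_n,
\]
so, since $\limsup_n\beta_n<1$ furnishes a constant $b<1$ with $1-\beta_n\ge1-b>0$ for large $n$, the target $\|t_n\|\to0$ is equivalent to $\|z_{n+1}-z_n\|\to0$; I will nonetheless keep working with $t_n$. Writing $w_n=z_n+t_n$ and inserting the displayed identity yields the second, crucial identity
\[
w_{n+1}-w_n=t_{n+1}-\beta_n t_n .
\]

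From this second identity the triangle inequality gives $\|t_{n+1}\|\le\beta_n\|t_n\|+\|w_{n+1}-w_n\|$, while the hypothesis, rewritten through $\|z_{n+1}-z_n\|=(1-\beta_n)\|t_n\|$, says that $\|w_{n+1}-w_n\|\le(1-\beta_n)\|t_n\|+\varepsilon_n$ with $\limsup_n\varepsilon_n\le0$. Combining the two I obtain
\[
\|t_{n+1}\|\le\|t_n\|+\varepsilon_n,
\]
so $\limsup_n(\|t_{n+1}\|-\|t_n\|)\le0$ and, by boundedness, $L:=\limsup_n\|t_n\|$ is finite. This step is clean, but I expect it to be only \emph{half} the argument: a bound of the form $s_{n+1}\le s_n+\varepsilon_n$ with $\limsup\varepsilon_n\le0$ does not by itself force a bounded nonnegative sequence $s_n=\|t_n\|$ to vanish (a slow-rise, fast-fall sawtooth survives it). The boundedness of the actual iterates $\{z_n\}$ and $\{w_n\}$ must therefore enter in an essential, geometric way.

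Hence the heart of the proof, and the step I expect to be the main obstacle, is to exclude $L>0$, which I would do by contradiction. If $L>0$, then at the times where $\|t_n\|$ is close to $L$ the inequality $\|t_{n+1}\|\le\|t_n\|+\varepsilon_n$ together with $t_{n+1}=\beta_n t_n+(w_{n+1}-w_n)$ and $\|w_{n+1}-w_n\|\le(1-\beta_n)\|t_n\|+\varepsilon_n$ forces asymptotic equality in the triangle estimate $\|\beta_n t_n+(w_{n+1}-w_n)\|\le\beta_n\|t_n\|+\|w_{n+1}-w_n\|$. I would first carry out the next inference in a Hilbert space, which is the setting in which the lemma is applied here ($E=\mathcal H$): such asymptotic equality, when \emph{both} summands have norm bounded away from $0$, forces the normalized gaps to stabilize, i.e. $t_{n+1}\approx t_n$. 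It is exactly here that the two conditions on $\beta_n$ are used: $\liminf_n\beta_n>0$ keeps the memory term $\beta_n t_n$ (of norm $\simeq\beta_n L$) non-negligible, so that near-equality transfers the direction of $t_n$ to $t_{n+1}$, while $\limsup_n\beta_n<1$ keeps $\|z_{n+1}-z_n\|=(1-\beta_n)\|t_n\|\ge(1-b)L>0$. Consequently, over a long block $[n_0,n_0+p]$ the increments $z_{n+1}-z_n=(1-\beta_n)t_n$ point in an essentially fixed direction and have norm bounded below, so $\|z_{n_0+p}-z_{n_0}\|\to\infty$ as $p\to\infty$, contradicting the boundedness of $\{z_n\}$. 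Ruling out $L>0$ then gives $\|z_n-w_n\|=\|t_n\|\to0$, which is the assertion. The delicate points I would still have to nail down are (i) upgrading ``$\|t_n\|\approx L$ infinitely often'' to a genuine block on which $\|t_n\|$ stays near $L$, so that the directional stabilization persists, and (ii) quantifying the implication ``near-equality $\Rightarrow$ near-alignment'' uniformly; both are places where the boundedness and the two $\beta$-bounds must be combined with care.
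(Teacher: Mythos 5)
The paper itself offers no proof of this lemma: it is quoted verbatim from Suzuki \cite{Suz}, so the only meaningful comparison is with Suzuki's original argument, and your route is genuinely different from it. Suzuki's proof is purely metric and valid in an arbitrary Banach space; it never uses any rotundity of the norm. Your plan hinges on the implication ``near-equality in the triangle inequality $\Rightarrow$ near-alignment,'' which is a Hilbert-space (more generally, uniformly convex) phenomenon and is simply false in, say, $\ell^{1}$ or $\ell^{\infty}$, where $\|u+v\|=\|u\|+\|v\|$ can hold for non-parallel $u,v$. So even if completed, your argument proves strictly less than the lemma as stated; it does cover the only case the paper actually uses, namely $E=\mathcal{H}$.

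Within the Hilbert setting the skeleton is sound: the identities $z_{n+1}-z_n=(1-\beta_n)t_n$ and $t_{n+1}=\beta_n t_n+(w_{n+1}-w_n)$, the bound $\|t_{n+1}\|\le\|t_n\|+\varepsilon_n$, and your diagnosis that this alone cannot force $\|t_n\|\to0$ are all correct. But the two ``delicate points'' you defer are not polish --- they \emph{are} the proof --- and one is set up in a form that would fail. For (i), the block on which $\|t_n\|$ stays within $2\delta$ of $L$ must be built \emph{backwards} from an index $n_k$ with $\|t_{n_k}\|\ge L-\delta$, using $\|t_n\|\ge\|t_{n+1}\|-\varepsilon_n$ to propagate the lower bound to earlier indices; going forward from $n_0$, as your notation $[n_0,n_0+p]$ suggests, nothing prevents $\|t_n\|$ from collapsing at the next step. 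For (ii), the quantitative statement you need is that $\|u+v\|\ge\|u\|+\|v\|-\epsilon$ implies $\bigl\|\frac{u}{\|u\|}-\frac{v}{\|v\|}\bigr\|^{2}\le 2\epsilon\bigl(\frac{1}{\|u\|}+\frac{1}{\|v\|}\bigr)$, applied with $u=\beta_n t_n$ and $v=w_{n+1}-w_n$, whose norms are bounded below on the block by roughly $aL$ and $(1-b)L$ with $a=\liminf\beta_n$, $b=\limsup\beta_n$ --- this is exactly where both hypotheses on $\{\beta_n\}$ enter. Choosing $\delta$ of order $p^{-2}$ keeps the cumulative directional drift over a block of length $p$ small, the increments $z_{j+1}-z_j$ then add up coherently, and $\|z_{n_k}-z_{n_k-p}\|\gtrsim p(1-b)L/2$ contradicts boundedness. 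In short: a completable and instructive alternative proof for the Hilbert case, but as submitted it is a program rather than a proof, and it cannot yield the lemma in its stated Banach-space generality.
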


The last result of this section is a particular case of the well-known
demiclosedness principle (see \cite[Corollary 4.18]{BC}).

\begin{lemma}
\label{Lem3} Let $\{x_{n}\}$ be a sequence in $Q$. If $\{x_{n}\}$ converges weakly to some $x$
and $\{x_{n}-Tx_{n}\}$ converges strongly to $0$, then $x\in
F_{ix}(T).$
\end{lemma}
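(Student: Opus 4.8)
The plan is to prove the statement directly by contradiction, exploiting the Opial-type identity available in a Hilbert space. First I would record the easy structural fact that the weak limit $x$ belongs to $Q$: since $Q$ is closed and convex it is weakly closed, so $x_n\rightharpoonup x$ forces $x\in Q$ and $Tx$ is well defined. Writing $y_n:=x_n-Tx_n$, the hypotheses are that $y_n\to 0$ strongly while $x_n\rightharpoonup x$; in particular $\{x_n\}$ is norm-bounded, so all the $\liminf$ quantities below are finite.

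The core of the argument is to assume $Tx\neq x$ and reach a contradiction. On one hand, nonexpansiveness of $T$ gives the upper bound
\[
\norm{x_n-Tx}\leq\norm{x_n-Tx_n}+\norm{Tx_n-Tx}\leq\norm{y_n}+\norm{x_n-x},
\]
so that, since $\norm{y_n}\to 0$, I obtain $\liminf_n\norm{x_n-Tx}\leq\liminf_n\norm{x_n-x}$. On the other hand, the Hilbert-space identity
\[
\norm{x_n-Tx}^2=\norm{x_n-x}^2+2\langle x_n-x,\,x-Tx\rangle+\norm{x-Tx}^2,
\]
together with $\langle x_n-x,\,x-Tx\rangle\to 0$ (a direct consequence of weak convergence), yields $\liminf_n\norm{x_n-Tx}^2=\liminf_n\norm{x_n-x}^2+\norm{x-Tx}^2$. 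Since $Tx\neq x$ the last term is strictly positive, giving the reverse strict inequality $\liminf_n\norm{x_n-x}<\liminf_n\norm{x_n-Tx}$, which contradicts the bound above. Hence $Tx=x$, i.e. $x\in F_{ix}(T)$.

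The step requiring the most care is the passage to $\liminf$: I would fix a subsequence $\{x_{n_k}\}$ along which $\norm{x_{n_k}-x}$ converges to $\liminf_n\norm{x_n-x}$, apply both estimates along this subsequence, and check that the cross term still vanishes (it does, being a subsequence of a null sequence) and that the displacement vector $x-Tx$ is held fixed throughout. There is no genuine obstacle here, since the argument is simply Opial's lemma specialized to the vector $y=Tx$; the only thing to verify carefully is that the two comparisons of $\liminf$'s are taken along a common subsequence so that the contradiction is legitimate.
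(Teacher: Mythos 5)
Your proof is correct. Note first that the paper does not prove Lemma \ref{Lem3} at all: it simply cites \cite[Corollary 4.18]{BC}, so there is no internal proof to compare against; what you have written is the standard Opial-type argument that underlies that reference, and it is sound. The two liminf estimates are valid: from $\norm{x_n-Tx}\le\norm{y_n}+\norm{x_n-x}$ with $\norm{y_n}\to0$ you get $\liminf_n\norm{x_n-Tx}\le\liminf_n\norm{x_n-x}$, and from the polarization identity with $\langle x_n-x,x-Tx\rangle\to0$ you get $\liminf_n\norm{x_n-Tx}^2=\liminf_n\norm{x_n-x}^2+\norm{x-Tx}^2$, which is strictly larger when $Tx\ne x$ (all quantities being finite by boundedness of a weakly convergent sequence). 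One small remark: your closing worry about extracting a common subsequence is unnecessary. Each of the identities $\liminf_n(a_n+c_n)=\liminf_n a_n$ for $c_n\to0$, $\liminf_n(a_n+d)=\liminf_n a_n+d$ for a constant $d$, and $\liminf_n b_n^2=(\liminf_n b_n)^2$ for nonnegative $b_n$ holds for the full sequence, so the contradiction is already legitimate without passing to subsequences. The only hypothesis you use beyond weak convergence is that $Q$ is closed and convex (hence weakly closed, so $Tx$ is defined), which you correctly flag at the outset.
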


\section{The convergence of an implicit version of the algorithm (HPA)}

In this section, we prove the strong convergence of the perturbed and implicit algorithm%

\[
x_{t}=P_{Q}(tf(x_{t})+(I-tF)Tx_{t}+e(t))
\]
as $t\rightarrow0^{+}$ to the unique solution $q^{\ast}$ of the variational inequality
problem (VIP) provided that the perturbation term $e(t)$ is sufficiently small.
More precisely, we will prove the following theorem.

\begin{theorem}
\label{The1}let $\delta_{0}^{\ast}:=2\frac{\eta-\alpha}{\kappa^{2}}$ and
$e:]0,\delta_{0}^{\ast}[\rightarrow\mathcal{H}$ such that
\[
\lim_{t\rightarrow0^{+}}\frac{\left\Vert e(t)\right\Vert }{t}=0.
\]
Then for every $t\in]0,\delta_{0}^{\ast}[$ there exists a unique $x_{t}\in Q$
such that%
\[
x_{t}=P_{Q}(tf(x_{t})+(I-tF)Tx_{t}+e(t)).
\]
Moreover, $x_{t}$ converges strongly in $\mathcal{H}$ as $t\rightarrow0^{+}$
toward $q^{\ast}$ the unique solution of the variational inequality problem (VIP).
\end{theorem}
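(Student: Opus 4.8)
The plan is to split the argument into two independent parts: existence and uniqueness of $x_t$ for each fixed $t$ via a contraction estimate, and then the strong convergence as $t\to0^+$ via the standard viscosity/demiclosedness scheme.

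For the first part I would fix $t\in]0,\delta_0^*[$ and study the self-map $\Phi_t(x)=P_Q(tf(x)+(I-tF)Tx+e(t))$ on $Q$. Since $P_Q$ is nonexpansive, it suffices to control the inner map. The key elementary estimate is that for $u,v\in Q$ one has $\|(I-tF)u-(I-tF)v\|^2\le(1-2t\eta+t^2\kappa^2)\|u-v\|^2$, obtained by expanding the square and invoking the strong monotonicity and the Lipschitz bound on $F$; set $\tau(t)=\sqrt{1-2t\eta+t^2\kappa^2}$. Combining this with the nonexpansiveness of $T$ and the $\alpha$-Lipschitz property of $f$ yields $\|\Phi_t(x)-\Phi_t(y)\|\le(t\alpha+\tau(t))\|x-y\|$. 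Using $\sqrt{1-s}\le1-s/2$ one checks $t\alpha+\tau(t)\le1-t(\eta-\alpha)+\tfrac{t^2\kappa^2}{2}<1$ exactly when $t<\delta_0^*$, so $\Phi_t$ is a contraction of the complete set $Q$ and Banach's theorem gives the unique $x_t$.

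For the convergence I would first show $\{x_t\}$ stays bounded as $t\to0^+$: estimating $\|x_t-p\|$ for a fixed $p\in C$ through the projection gives $(1-t\alpha-\tau(t))\|x_t-p\|\le t\|f(p)-Fp\|+\|e(t)\|$; dividing by $t$, using $\|e(t)\|/t\to0$ together with $1-t\alpha-\tau(t)\ge t(\eta-\alpha-\tfrac{t\kappa^2}{2})$, bounds $\|x_t-p\|$. Next, writing $y_t:=tf(x_t)+(I-tF)Tx_t+e(t)$ one has $y_t-Tx_t=t(f(x_t)-FTx_t)+e(t)\to0$, and since $Tx_t\in Q$ the nonexpansiveness of $P_Q$ gives $\|x_t-Tx_t\|=\|P_Q(y_t)-P_Q(Tx_t)\|\le\|y_t-Tx_t\|\to0$.

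The heart of the proof is the final step. Using the variational characterization of the projection in the form $\|x_t-q^*\|^2\le\langle y_t-q^*,x_t-q^*\rangle$ (valid since $q^*\in Q$) and the decomposition $y_t-q^*=t(f(x_t)-f(q^*))+[(I-tF)Tx_t-(I-tF)q^*]-t(Fq^*-f(q^*))+e(t)$ (which uses $Tq^*=q^*$), I would bound the first two inner products by $t\alpha\|x_t-q^*\|^2$ and $\tau(t)\|x_t-q^*\|^2$. Rearranging and dividing by $t$ produces the master inequality $(\eta-\alpha-\tfrac{t\kappa^2}{2})\|x_t-q^*\|^2\le\langle Fq^*-f(q^*),q^*-x_t\rangle+\tfrac{1}{t}\langle e(t),x_t-q^*\rangle$. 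The main obstacle is that $x_t\notin C$, so the term $\langle Fq^*-f(q^*),q^*-x_t\rangle$ cannot be signed directly by (VIP); I would resolve this by passing to a weak limit. Given any $t_n\to0^+$, boundedness yields a subsequence $x_{t_n}\rightharpoonup\bar x$; since $x_{t_n}-Tx_{t_n}\to0$, the demiclosedness principle (Lemma \ref{Lem3}) gives $\bar x\in C$, whence (VIP) forces $\langle Fq^*-f(q^*),q^*-\bar x\rangle\le0$. Letting $n\to\infty$ in the master inequality, the error term vanishes because $\|e(t_n)\|/t_n\to0$ with $\{x_{t_n}\}$ bounded, and the right-hand side tends to a quantity $\le0$, which forces $x_{t_n}\to q^*$ strongly; a routine subsequence argument then upgrades this to $x_t\to q^*$ as $t\to0^+$.
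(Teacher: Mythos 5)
Your proposal is correct, and its overall architecture coincides with the paper's: the same contraction estimate on $x\mapsto tf(x)+(I-tF)Tx$ (via $\|(I-tF)u-(I-tF)v\|^2\le(1-2t\eta+t^2\kappa^2)\|u-v\|^2$ and $\sqrt{1-s}\le 1-s/2$) followed by Banach's theorem for existence and uniqueness, then boundedness of the net, $x_t-Tx_t\to 0$, the demiclosedness principle to place weak cluster points in $F_{ix}(T)$, the sign condition from (VIP), and a quantitative inequality that forces strong convergence. Two technical choices differ, though. First, the paper introduces the auxiliary \emph{unperturbed} fixed point $y_t=P_Q(S_t(y_t))$, shows $\|x_t-y_t\|\le\|e(t)\|/(t\sigma_0)\to 0$, and then runs the whole convergence argument on the unperturbed family; you instead carry $e(t)$ through every estimate. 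Your route is slightly longer in each step but more transparent (the paper's later displays silently drop the perturbation, which is only legitimate because of the preliminary reduction), while the paper's reduction disposes of the perturbation once and for all. Second, for the final inequality the paper expands $\|S_t(y_t)-q^*\|^2$ as $\|S_t(y_t)-S_t(q^*)\|^2+2\langle S_t(y_t)-S_t(q^*),S_t(q^*)-q^*\rangle+\|S_t(q^*)-q^*\|^2$ and uses the Lipschitz bound on $S_t$, whereas you invoke the variational characterization $\|P_Q(y)-q^*\|^2\le\langle y-q^*,P_Q(y)-q^*\rangle$ together with a linear decomposition of $y_t-q^*$; both yield the same master inequality $\bigl(\eta-\alpha-\tfrac{t\kappa^2}{2}\bigr)\|x_t-q^*\|^2\le\langle Fq^*-f(q^*),q^*-x_t\rangle+o(1)$, and your projection-based version is arguably the cleaner of the two. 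The concluding subsequence argument you give is equivalent to the paper's device of choosing a sequence realizing the $\limsup$.
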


The proof relies essentially on the following lemma which will be also used in
the next section devoted to the study of the strong convergence of the
algorithm (HPA).

\begin{lemma}
\label{Lem4}Let $\delta_{0}\in]0,\delta_{0}^{\ast}[.$ For every $t\in
]0,\delta_{0}],$ the mapping $S_{t}:Q\rightarrow\mathcal{H}$ defined by
\[
S_{t}(x)=tf(x)+(I-tF)Tx
\]
is Lipschitzian with coefficient $1-t\sigma_{0}$ where $\sigma_{0}%
:=\eta-\alpha-\frac{\kappa^{2}\delta_{0}}{2}.$
\end{lemma}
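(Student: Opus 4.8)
The plan is to estimate $\|S_t(x) - S_t(y)\|$ directly, splitting off the contribution of $f$ and treating the operator $I - tF$ applied to $Tx, Ty$ separately. Writing
\[
S_t(x) - S_t(y) = t\bigl(f(x) - f(y)\bigr) + \bigl((I - tF)Tx - (I - tF)Ty\bigr),
\]
the triangle inequality together with the $\alpha$-Lipschitz property of $f$ contributes a term $t\alpha\|x - y\|$, so everything reduces to controlling $\|(I - tF)u - (I - tF)v\|$ with $u := Tx$ and $v := Ty$.

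First I would square this quantity and expand:
\[
\|(I - tF)u - (I - tF)v\|^2 = \|u - v\|^2 - 2t\langle Fu - Fv, u - v\rangle + t^2\|Fu - Fv\|^2.
\]
Applying the $\eta$-strong monotonicity of $F$ to the middle term and the $\kappa$-Lipschitz property to the last, this is bounded above by $(1 - 2t\eta + t^2\kappa^2)\|u - v\|^2$.

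The key step, and the one I expect to carry the real content, is to pass from this quadratic bound to a contraction factor that is affine in $t$. I claim $\sqrt{1 - 2t\eta + t^2\kappa^2} \leq 1 - t\eta + \tfrac{t^2\kappa^2}{2}$. To verify it I would square the right-hand side and observe that its excess over $1 - 2t\eta + t^2\kappa^2$ equals $t^2\bigl(\eta - \tfrac{t\kappa^2}{2}\bigr)^2 \geq 0$; one must also check that the right-hand side is nonnegative, which holds because $\eta \leq \kappa$ forces the quadratic $1 - t\eta + \tfrac{t^2\kappa^2}{2}$ to stay positive. Since this factor is nonnegative, the nonexpansiveness of $T$ then gives
\[
\|(I - tF)Tx - (I - tF)Ty\| \leq \Bigl(1 - t\eta + \tfrac{t^2\kappa^2}{2}\Bigr)\|x - y\|.
\]

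Finally I would combine the two contributions to obtain the coefficient $1 - t\bigl(\eta - \alpha - \tfrac{t\kappa^2}{2}\bigr)$, and then use $t \leq \delta_0$ to replace $\tfrac{t\kappa^2}{2}$ by the larger $\tfrac{\delta_0\kappa^2}{2}$, which yields exactly $1 - t\sigma_0$. The hypothesis $\delta_0 < \delta_0^{\ast} = 2\tfrac{\eta - \alpha}{\kappa^2}$ guarantees $\sigma_0 > 0$, so this is a genuine contraction factor strictly below $1$; the remaining estimates are routine applications of the triangle inequality.
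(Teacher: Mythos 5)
Your proposal is correct and follows essentially the same route as the paper: the same decomposition into the $f$-term and the $(I-tF)\circ T$ term, the same expansion and bound $1-2t\eta+t^2\kappa^2$ for the squared norm, and the same affine-in-$t$ contraction factor $1-t\eta+\tfrac{t^2\kappa^2}{2}$ (the paper obtains it by invoking $\sqrt{1-x}\le 1-\tfrac{x}{2}$, you by squaring directly, which is the same computation). The final combination and the replacement of $\tfrac{t\kappa^2}{2}$ by $\tfrac{\delta_0\kappa^2}{2}$ also match the paper.
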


\begin{proof}
Let $t\in]0,\delta_{0}]$ and $x,y\in Q.$ We have
\begin{align*}
\left\Vert (I-tF)Tx-(I-tF)Ty\right\Vert ^{2}  &  =\left\Vert Tx-Ty\right\Vert
^{2}-2t\langle F(Tx)-F(Ty),Tx-Ty\rangle\\
&+ t^{2}\left\Vert F(Tx)-F(Ty)\right\Vert
^{2}\\
&  \leq\left(  1-2t\eta+t^{2}\kappa^{2}\right)  \left\Vert Tx-Ty\right\Vert
^{2}\\
&  \leq\left(  1-2t(\eta-\frac{t\kappa^{2}}{2})\right)  \left\Vert
x-y\right\Vert ^{2}.
\end{align*}
Hence by using the elementary inequality%
\[
\sqrt{1-x}\leq1-\frac{x}{2},\text{ for all }x\in\lbrack0,1],
\]
we deduce that
\[
\left\Vert (I-tF)Tx-(I-tF)Ty\right\Vert \leq\left(  1-t(\eta-\frac{t\kappa
^{2}}{2})\right)  \left\Vert x-y\right\Vert .
\]
Therefore,%
\begin{align*}
\left\Vert S_{t}(x)-S_{t}(y)\right\Vert  &  \leq t\left\Vert
f(x)-f(y)\right\Vert +\left\Vert (I-tF)Tx-(I-tF)Ty\right\Vert \\
&  \leq\left(  t\alpha+1-t(\eta-\frac{t\kappa^{2}}{2})\right)  \left\Vert
x-y\right\Vert \\
&  =\left(  1-t(\eta-\alpha-\frac{t\kappa^{2}}{2})\right)  \left\Vert
x-y\right\Vert \\
&  \leq\left(  1-t(\eta-\alpha-\frac{\kappa^{2}\delta_{0}}{2})\right)
\left\Vert x-y\right\Vert \\
&  =(1-\sigma_{0}t)\left\Vert x-y\right\Vert .
\end{align*}
This completes the proof.
\end{proof}

Now we are in position to prove the main result of this section.

\begin{proof} Let $\delta_{0}\in]0,\delta_{0}^{\ast}[$ be a fixed
real. Let $t\in]0,\delta_{0}].$ Since the operator $P_{Q}$ is nonexpansive, it
follows from the previous lemma that the two mapping $\varphi_{t}$ and
$\phi_{t}$ defined from $Q$ to $Q$ by%
\begin{align*}
\varphi_{t}(x) &  =P_{Q}\left(  S_{t}(x)\right)  ,\\
\phi_{t}(x) &  =P_{Q}\left(  S_{t}(x)+e(t)\right),
\end{align*}
are contractions with the same coefficient $1-t\sigma_{0}\in\lbrack0,1[.$
Hence, the classical Banach fixed point theorem ensures the existence of a
unique $x_{t}$ and $y_{t}$ in $Q$ such that $x_{t}=P_{Q}\left(  S_{t}%
(x_{t})\right)  $ and $y_{t}=P_{Q}\left(  S_{t}(y_{t})+e(t)\right).$ Using again Lemma \ref{Lem4} and the fact that $P_{Q}$ is nonexpansive, we
obtain
\[
\left\Vert x_{t}-y_{t}\right\Vert \leq(1-t\sigma_{0})\left\Vert x_{t}%
-y_{t}\right\Vert +\left\Vert e(t)\right\Vert ,
\]
which implies that%
\[
\left\Vert x_{t}-y_{t}\right\Vert \leq\frac{\left\Vert e(t)\right\Vert
}{t\sigma_{0}}.
\]
Hence, from the assumption on $e(t),$ we get
\[
\left\Vert x_{t}-y_{t}\right\Vert \rightarrow0\text{ as }t\rightarrow0^{+}.
\]
Therefore, in order to prove that $x_{t}\rightarrow q^{\ast}$ as $t\rightarrow
0^{+}$, it suffices to prove that $y_{t}\rightarrow q^{\ast}$ as
$t\rightarrow0^{+}.$

To do this let us first show that the family $(y_{t})_{0<t\leq\delta_{0}}$ is
bounded in $\mathcal{H}.$ Pick $q\in F_{ix}(T).$ By using the fact that
$P_{Q}$ is nonexpansive and Lemma \ref{Lem4}, we easily deduce that for every
$t\in]0,\delta_{0}]$ we have%
\begin{align*}
\left\Vert y_{t}-q\right\Vert  &  =\left\Vert y_{t}-P_{Q}(q)\right\Vert \\
&  \leq\left\Vert P_{Q}(S_{t}(y_{t}))-P_{Q}(S_{t}(q))\right\Vert +\left\Vert
P_{Q}(S_{t}(q))-P_{Q}(q)\right\Vert \\
&  \leq\left\Vert S_{t}(y_{t})-S_{t}(q)\right\Vert +\left\Vert S_{t}%
(q)-q\right\Vert \\
&  \leq(1-t\sigma_{0})\left\Vert y_{t}-q\right\Vert +t\left\Vert
f(q)-F(q)\right\Vert .
\end{align*}
Hence,%
\[
\sup_{0<t\leq\delta_{0}}\left\Vert y_{t}-q\right\Vert \leq\frac{\left\Vert
f(q)-F(q)\right\Vert }{\sigma_{0}},
\]
which implies that $(y_{t})_{0<t\leq\delta_{0}}$ is bounded in $\mathcal{H}$,
and so is $\left(  f(y_{t})-F(Ty_{t})\right)  _{0<t\leq\delta_{0}}$ since the
mapping $f-F\circ T$ is Lipschitzian. Therefore, there exists a constant $M>0$
such that for every $t\in]0,\delta_{0}]$ we have%
\begin{align*}
\left\Vert y_{t}-Ty_{t}\right\Vert  &  =\left\Vert P_{Q}(S(y_{t}%
))-P_{Q}(Ty_{t})\right\Vert \\
&  \leq\left\Vert S_{t}(y_{t})-Ty_{t}\right\Vert \\
&  =t\left\Vert f(y_{t})-F(Ty_{t})\right\Vert \\
&  \leq M~t.
\end{align*}
Hence,%
\begin{equation}
y_{t}-Ty_{t}\rightarrow0\text{ in }\mathcal{H}\text{ as }t\rightarrow
0^{+}.\label{h1}%
\end{equation}
On the other hand, since $(y_{t})_{0<t\leq\delta_{0}}$ is bounded in
$\mathcal{H},$ there exists a sequence $(t_{n})_{n}\in]0,\delta_{0}]$ which
converges to $0$ such that the sequence $\{y_{t_{n}}\}$ converges weakly in
$\mathcal{H}$ to some $y$ and
\begin{align*}
\lim\sup_{t\rightarrow0^{+}}\langle y_{t}-q^{\ast},f(q^{\ast})-F(q^{\ast
})\rangle &  =\lim_{n\rightarrow+\infty}\langle y_{t_{n}}-q^{\ast},f(q^{\ast
})-F(q^{\ast})\rangle\\
&  =\langle y-q^{\ast},f(q^{\ast})-F(q^{\ast})\rangle.
\end{align*}
Let us notice that, thanks to Lemma \ref{Lem3}, we deduce from (\ref{h1}) that
$y\in F_{ix}(T);$ hence, from the definition of $q^{\ast},$ we conclude that
\begin{equation}
\lim\sup_{t\rightarrow0^{+}}\langle y_{t}-q^{\ast},f(q^{\ast})-F(q^{\ast
})\rangle\leq0.\label{h2}%
\end{equation}
Finally, for every $t\in]0,\delta_{0}],$ we have%
\begin{align*}
\left\Vert y_{t}-q^{\ast}\right\Vert ^{2} &  =\left\Vert P_{Q}(S_{t}%
(y_{t}))-P_{Q}(q^{\ast})\right\Vert ^{2}\\
&  \leq\left\Vert S_{t}(y_{t})-q^{\ast}\right\Vert ^{2}\\
&  =\left\Vert S_{t}(y_{t})-S_{t}(q^{\ast})\right\Vert ^{2}+2\langle
S_{t}(y_{t})-S_{t}(q^{\ast}),S_{t}(q^{\ast})-q^{\ast}\rangle+\left\Vert
S_{t}(q^{\ast})-q^{\ast}\right\Vert ^{2}\\
&  \leq(1-t\sigma_{0})^{2}\left\Vert y_{t}-q^{\ast}\right\Vert ^{2}+2t\langle
y_{t}-q^{\ast},f(q^{\ast})-F(q^{\ast})\rangle\\
&  +2t^{2}\langle f(y_{t}%
)-F(Ty_{t}),f(q^{\ast})-F(q^{\ast})\rangle -t^{2}\left\Vert f(q^{\ast})-F(q^{\ast})\right\Vert ^{2}\\
&  \leq\left(  1-2\sigma_{0}t\right)  \left\Vert y_{t}-q^{\ast}\right\Vert
^{2}+2t~\langle y_{t}-q^{\ast},f(q^{\ast})-F(q^{\ast})\rangle+Ct^{2},
\end{align*}
where $C>0$ is a constant independent of $n.$ Therefore for every
$t\in]0,\delta_{0}]$%
\[
\left\Vert y_{t}-q^{\ast}\right\Vert ^{2}\leq\frac{1}{\sigma_{0}}\left(
\langle y_{t}-q^{\ast},f(q^{\ast})-F(q^{\ast})\rangle+\frac{C}{2}t\right)
\]
Thus, by using the estimate (\ref{h2}), we deduce that $y_{t}\rightarrow
q^{\ast}$ in $\mathcal{H}$ as $t\rightarrow0^{+}.$ This completes the proof of
Theorem \ref{The1}.
\end{proof}

\section{The convergence of the algorithm (HPA).}

In this section, we study the strong convergence of the averaged and perturbed algorithm (HPA)%

\begin{equation}
x_{n+1}=\beta_{n}x_{n}+(1-\beta_{n})P_{Q}\left(  \alpha_{n}f(x_{n}%
)+(I-\alpha_{n}F)Tx_{n}+e_{n}.\right)  \tag{HPA}%
\end{equation}
Precisely, we will prove the following result.

\begin{theorem}
\label{The2}Let $\{e_{n}\}$ be a sequence in $\mathcal{H}$ and $\{\alpha
_{n}\}\in]0,1]$ and $\{\beta_{n}\}\in\lbrack0,1]$ two real sequences such that:

\begin{enumerate}
\item[(i)] $\alpha_{n}\rightarrow0$ and $\sum_{n=0}^{+\infty}\alpha
_{n}=+\infty$

\item[(ii)] One of the two following two conditions is satisfied:

\item[(h1)] $0<\lim\inf_{n\rightarrow+\infty}\beta_{n}\leq\lim\sup
_{n\rightarrow+\infty}\beta_{n}<1.$

\item[(h2)] $\lim\sup_{n\rightarrow+\infty}\beta_{n}<1$, either $\frac{\beta
_{n+1}-\beta_{n}}{\alpha_{n}}\rightarrow0$ or $\sum_{n=0}^{+\infty}\left\vert
\beta_{n+1}-\beta_{n}\right\vert <\infty$ and either $\frac{\alpha_{n+1}-\alpha_{n}%
}{\alpha_{n}}\rightarrow1$ or $\sum_{n=0}^{+\infty}\left\vert \alpha
_{n+1}-\alpha_{n}\right\vert <\infty.$

\item[(iii)] $\sum_{n=0}^{+\infty}\left\Vert e_{n}\right\Vert <\infty$ or
$\frac{\left\Vert e_{n}\right\Vert }{\alpha_{n}}\rightarrow0.$
\end{enumerate}
Then for every $x_{0}\in Q$, the sequence $\{x_{n}\}$ generated
by the algorithm (HPA) converges strongly in $\mathcal{H}$ to $q^{\ast}$ the
unique solution of the variational inequality problem (VIP).
\end{theorem}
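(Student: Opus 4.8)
The plan is to establish strong convergence of the iterates $\{x_n\}$ generated by (HPA) to $q^{\ast}$ by reducing the discrete algorithm to the implicit solutions $x_t$ studied in Theorem~\ref{The1}, and then applying the fundamental recursion estimate of Lemma~\ref{Lem1}. First I would set $y_n := P_Q\left(S_{\alpha_n}(x_n)+e_n\right)$, so that the iteration reads $x_{n+1}=\beta_n x_n+(1-\beta_n)y_n$. The opening move is to show boundedness of $\{x_n\}$: picking any fixed point $q$ and using that $P_Q$ is nonexpansive together with the Lipschitz estimate $\left\Vert S_{\alpha_n}(x)-S_{\alpha_n}(q)\right\Vert \le (1-\sigma_0\alpha_n)\left\Vert x-q\right\Vert$ from Lemma~\ref{Lem4} (valid once $\alpha_n\le\delta_0$ for large $n$, which holds since $\alpha_n\to0$), one derives a contraction-type inequality $\left\Vert x_{n+1}-q\right\Vert \le (1-(1-\beta_n)\sigma_0\alpha_n)\left\Vert x_n-q\right\Vert +(1-\beta_n)(\alpha_n\left\Vert f(q)-F(q)\right\Vert +\left\Vert e_n\right\Vert)$, from which a routine induction gives uniform boundedness of $\{x_n\}$, and hence of $\{f(x_n)\}$, $\{F(Tx_n)\}$, and $\{y_n\}$.

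The next and central step is to prove the asymptotic regularity property $\left\Vert x_n-Tx_n\right\Vert \to 0$. This is where the two alternative hypotheses in (ii) play their role. Under (h1) I would invoke Suzuki's Lemma~\ref{Lem2} directly: one checks that $\left\Vert y_{n+1}-y_n\right\Vert -\left\Vert x_{n+1}-x_n\right\Vert$ has nonpositive limit superior, using that $\alpha_n\to0$ to control the difference $S_{\alpha_{n+1}}(x_{n+1})-S_{\alpha_n}(x_{n+1})=(\alpha_{n+1}-\alpha_n)(f(x_{n+1})-F(Tx_{n+1}))$ and that the perturbations satisfy $\left\Vert e_{n+1}-e_n\right\Vert\to0$ (a consequence of either branch of (iii)); Lemma~\ref{Lem2} then yields $\left\Vert x_n-y_n\right\Vert\to0$. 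Under (h2), where $\beta_n$ may approach $0$, Suzuki's lemma is unavailable, so instead I would estimate $\left\Vert x_{n+1}-x_n\right\Vert$ directly and feed the resulting recursion into Lemma~\ref{Lem1}: the summability/ratio conditions on $\{\alpha_n\}$, $\{\beta_n\}$, and $\{e_n\}$ are exactly what is needed to show $\sum$-type or ratio control forcing $\left\Vert x_{n+1}-x_n\right\Vert\to0$. In either case, combining $\left\Vert x_n-y_n\right\Vert\to0$ (or $\left\Vert x_{n+1}-x_n\right\Vert\to0$) with $\left\Vert y_n-Tx_n\right\Vert \le \left\Vert S_{\alpha_n}(x_n)-Tx_n\right\Vert +\left\Vert e_n\right\Vert = \alpha_n\left\Vert f(x_n)-F(Tx_n)\right\Vert +\left\Vert e_n\right\Vert\to0$ delivers $\left\Vert x_n-Tx_n\right\Vert\to0$.

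With asymptotic regularity in hand, the demiclosedness principle (Lemma~\ref{Lem3}) lets me extract the crucial variational estimate $\limsup_{n\to\infty}\langle x_n-q^{\ast},f(q^{\ast})-F(q^{\ast})\rangle \le 0$, by passing to a weakly convergent subsequence realizing the limit superior, whose weak limit lies in $F_{ix}(T)$ and hence is admissible against the optimality of $q^{\ast}$ in (VIP). The final step expands $\left\Vert x_{n+1}-q^{\ast}\right\Vert^2$ in the same manner as in the proof of Theorem~\ref{The1}: using convexity of $\left\Vert\cdot\right\Vert^2$ to split off the $\beta_n x_n$ term, then bounding $\left\Vert y_n-q^{\ast}\right\Vert^2$ via $\left\Vert S_{\alpha_n}(x_n)-q^{\ast}\right\Vert^2 = \left\Vert S_{\alpha_n}(x_n)-S_{\alpha_n}(q^{\ast})\right\Vert^2 + 2\langle\cdots\rangle+\left\Vert S_{\alpha_n}(q^{\ast})-q^{\ast}\right\Vert^2$, one arrives at a recursion of the form $a_{n+1}\le(1-\gamma_n)a_n+\gamma_n r_n+\delta_n$ with $a_n=\left\Vert x_n-q^{\ast}\right\Vert^2$, $\gamma_n$ proportional to $(1-\beta_n)\sigma_0\alpha_n$, $r_n$ governed by the variational estimate, and $\delta_n$ absorbing the error and quadratic terms. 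Verifying hypotheses (1)--(3) of Lemma~\ref{Lem1} then closes the argument. I expect the main obstacle to be the asymptotic regularity step under hypothesis (h2): unlike the clean application of Suzuki's lemma under (h1), handling the case where $\beta_n\to0$ requires carefully tracking the ratio and summability conditions to ensure the recursion for $\left\Vert x_{n+1}-x_n\right\Vert$ genuinely satisfies the hypotheses of Lemma~\ref{Lem1}, and keeping the error term $\{e_n\}$ controlled simultaneously under both branches of (iii).
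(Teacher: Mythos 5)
Your proposal is essentially correct and reaches the conclusion by the same skeleton as the paper (boundedness, asymptotic regularity via Suzuki's Lemma~\ref{Lem2} under (h1) or a difference recursion fed into Lemma~\ref{Lem1} under (h2), demiclosedness for the $\limsup$ estimate, and a final application of Lemma~\ref{Lem1}), but it differs in one genuine structural choice: you carry the perturbation $\{e_n\}$ through every stage of the argument, whereas the paper disposes of it at the very outset. The paper introduces the auxiliary \emph{unperturbed} sequence $y_0=x_0$, $y_{n+1}=\beta_n y_n+(1-\beta_n)P_Q\left(S_{\alpha_n}(y_n)\right)$, shows $\left\Vert y_{n+1}-x_{n+1}\right\Vert\le(1-\gamma_n)\left\Vert y_n-x_n\right\Vert+\left\Vert e_n\right\Vert$ with $\gamma_n=\sigma_0(1-\beta_n)\alpha_n$, and applies Lemma~\ref{Lem1} (putting $\left\Vert e_n\right\Vert$ into $\delta_n$ or into $\gamma_n r_n$ according to the two branches of (iii)) to get $x_n-y_n\to0$; all subsequent estimates are then performed on the clean sequence $\{y_n\}$ with no error terms anywhere. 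Your route buys a shorter setup at the price of re-verifying, at each of the three later stages, that the error term fits the hypotheses of the lemma being used. Most of these verifications go through as you indicate, but one deserves explicit care: in the (h2) case your recursion for $\left\Vert x_{n+1}-x_n\right\Vert$ picks up \emph{both} $\left\Vert e_n\right\Vert$ and $\left\Vert e_{n-1}\right\Vert$, and under the ratio branch of (iii) you only control $\left\Vert e_{n-1}\right\Vert/\alpha_{n-1}$, not $\left\Vert e_{n-1}\right\Vert/\alpha_n$; unless $\alpha_{n-1}/\alpha_n$ is bounded (which the hypotheses do not guarantee under the summability branch for $\{\alpha_n\}$), this shifted term does not obviously fit either slot of Lemma~\ref{Lem1}. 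The paper's auxiliary-sequence reduction sidesteps exactly this wrinkle, which is the main argument in its favor; otherwise the two proofs are of comparable difficulty and rely on the same three lemmas.
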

\begin{proof}
Since $\alpha_{n}\rightarrow0$ $as$ $n\rightarrow\infty$ and 
we are only interested in the study of the asymptotic behavior of the
sequence $\{x_{n}\}$, we can assume without loss of generality that for all $n\in\mathbb{N},$
$\alpha_{n}\in]0,\delta_{0}]$ where $\delta_{0}\in]0,\delta_{0}^{\ast}[$ is a
fixed real. Let $\{y_{n}\}$ be the sequence defined as follows%
\[
\left\{
\begin{array}
[l]{l}%
y_{0}=x_{0}\\
y_{n+1}=\beta_{n}y_{n}+(1-\beta_{n})P_{Q}(\alpha_{n}f(y_{n})+(I-\alpha
_{n}F)Ty_{n}),~n\geq0.
\end{array}
\right.
\]
Using the fact $P_{Q}$ is nonexpansive and Lemma \ref{Lem4}, we easily obtain%
\begin{align*}
\left\Vert y_{n+1}-x_{n+1}\right\Vert  &  \leq\beta_{n}\left\Vert y_{n}%
-x_{n}\right\Vert +(1-\beta_{n})\left\Vert P_{Q}(S_{\alpha_{n}}(y_{n}%
))-P_{Q}(S_{\alpha_{n}}(x_{n})+e_{n})\right\Vert \\
&  \leq[\beta_{n}+(1-\beta_{n})\left(  1-\sigma_{0}\alpha_{n}\right)
]\left\Vert y_{n}-x_{n}\right\Vert +(1-\beta_{n})\left\Vert e_{n}\right\Vert
\\
&  \leq(1-\gamma_{n})\left\Vert y_{n}-x_{n}\right\Vert +\left\Vert
e_{n}\right\Vert ,
\end{align*}
where $\gamma_{n}=\sigma_{0}(1-\beta_{n})\alpha_{n}.$ 
\par\noindent Since $\lim
\sup_{n\rightarrow+\infty}\beta_{n}<1,$ there exists $a>0$ and $n_{0}%
\in\mathbb{N}$ such that $a\alpha_{n}\leq\gamma_{n}\leq1$ for all $n\geq
n_{0}$. Hence, by applying Lemma \ref{Lem1}, we deduce that%
\begin{equation}
y_{n}-x_{n}\rightarrow0. \label{kk}%
\end{equation}
Therefore it suffices to prove that the sequence $\{y_{n}\}$ converges
strongly to $q^{\ast}$ to conclude that $\{x_{n}\}$ also converges strongly to
$q^{\ast}.$

\par\noindent Let us first show that $\{y_{n}\}$ is bounded in $\mathcal{H}.$ Let $q\in
F_{ix}(T).$ For every $n\in\mathbb{N},$ we have%
\begin{align*}
\left\Vert y_{n+1}-q\right\Vert  &  \leq\beta_{n}\left\Vert y_{n}-q\right\Vert
+(1-\beta_{n})[\left\Vert P_{Q}(S_{\alpha_{n}}(y_{n}))-P_{Q}(S_{\alpha_{n}%
}(q))\right\Vert \\
&  +\left\Vert P_{Q}(S_{\alpha_{n}}(q))-P_{Q}(q)\right\Vert ]\\
&  \leq\beta_{n}\left\Vert y_{n}-q\right\Vert +(1-\beta_{n})[\left\Vert
S_{\alpha_{n}}(y_{n})-S_{\alpha_{n}}(q)\right\Vert +\left\Vert S_{\alpha_{n}%
}(q)-q\right\Vert ]\\
&  \leq\beta_{n}\left\Vert y_{n}-q\right\Vert +(1-\beta_{n})\left[  \left(
1-\sigma_{0}\alpha_{n}\right)  \left\Vert y_{n}-q\right\Vert +\alpha
_{n}\left\Vert f(q)-F(q)\right\Vert \right]  .
\end{align*}
The last inequality immediately implies that the sequence
$$v_{n}%
:=\max\{\left\Vert y_{n}-q\right\Vert ,\frac{\left\Vert f(q)-F(q)\right\Vert
}{\sigma_{0}}\}$$
is decreasing. Therefore the sequence $\{y_{n}\}$ is bounded
in $\mathcal{H}$ and so are the sequences $\{f(y_{n})\}$ and $\{F(Ty_{n})\}.$
\par\noindent Now we are going to prove that
\begin{equation}
y_{n}-Ty_{n}\rightarrow0.\label{HM}%
\end{equation}
Let us first assume that the condition (h1) is satisfied. For every
$n\in\mathbb{N},$ set $z_{n}=P_{Q}\left(  \alpha_{n}f(y_{n})+(I-\alpha
_{n}F)Ty_{n}\right)  .$ We have, the sequences $\{y_{n}\}$ and $\{z_{n}\}$ are
bounded in $\mathcal{H},$
\[
y_{n+1}=\beta_{n}y_{n}+(1-\beta_{n})z_{n}\text{ for every }n,
\]
and
\begin{align*}
\left\Vert z_{n+1}-z_{n}\right\Vert  &  \leq(\alpha_{n}+\alpha_{n+1}%
)\sup_{m\geq0}\left\Vert f(y_{m})-F(Ty_{m})\right\Vert +\left\Vert
Ty_{n+1}-Ty_{n}\right\Vert \\
&  \leq(\alpha_{n}+\alpha_{n+1})\sup_{m\geq0}\left\Vert f(y_{m})-F(Ty_{m}%
)\right\Vert +\left\Vert y_{n+1}-y_{n}\right\Vert
\end{align*}
which implies
\[
\lim\sup_{n\rightarrow+\infty}\left\Vert z_{n+1}-z_{n}\right\Vert -\left\Vert
y_{n+1}-y_{n}\right\Vert \leq0.
\]
Therefore, from Lemma \ref{Lem2}, we deduce that%
\[
z_{n}-y_{n}\rightarrow0,
\]
which combined with the fact that%
\begin{align*}
\left\Vert z_{n}-Ty_{n}\right\Vert  &  =\left\Vert P_{Q}\left(  \alpha
_{n}f(y_{n})+(I-\alpha_{n}F)Ty_{n}\right)  -P_{Q}(Ty_{n})\right\Vert \\
&  \leq\alpha_{n}\sup_{m\geq0}\left\Vert f(y_{m})-F(Ty_{m})\right\Vert
\rightarrow0\text{ as }n\rightarrow+\infty,
\end{align*}
implies the required result (\ref{HM}).

Let us now establish (\ref{HM}) under the assumption (h2). A simple
computation using the fact that the sequence $\{y_{n}\}$ and $\{P_{Q}\left(
\alpha_{n}f(y_{n})+(I-\alpha_{n}F)Ty_{n}\right)  \}$ are bounded in
$\mathcal{H},$ ensures the existence of two real constants $M_{1},M_{2}>0$
such that for every $n\in\mathbb{N},$%
\begin{align*}
\left\Vert y_{n+1}-y_{n}\right\Vert  &  \leq\beta_{n}\left\Vert y_{n}%
-y_{n-1}\right\Vert +(1-\beta_{n})\left\Vert P_{Q}(S_{\alpha_{n}}%
(y_{n}))-P_{Q}(S_{\alpha_{n-1}}(y_{n-1}))\right\Vert +M_{1}\left\vert
\beta_{n}-\beta_{n-1}\right\vert \\
&  \leq\beta_{n}\left\Vert y_{n}-y_{n-1}\right\Vert +(1-\beta_{n})\left\Vert
S_{\alpha_{n}}(y_{n})-S_{\alpha_{n-1}}(y_{n-1})\right\Vert +M_{1}\left\vert
\beta_{n}-\beta_{n-1}\right\vert \\
&  \leq\beta_{n}\left\Vert y_{n}-y_{n-1}\right\Vert +(1-\beta_{n})\left\Vert
S_{\alpha_{n}}(y_{n})-S_{\alpha_{n}}(y_{n-1})\right\Vert +\\
&  (1-\beta_{n})\left\Vert S_{\alpha_{n}}(y_{n-1})-S_{\alpha_{n-1}}%
(y_{n-1})\right\Vert +M_{1}\left\vert \beta_{n}-\beta_{n-1}\right\vert \\
&  \leq\beta_{n}\left\Vert y_{n}-y_{n-1}\right\Vert +(1-\beta_{n}%
)(1-\sigma_{0}\alpha_{n})\left\Vert y_{n}-y_{n-1}\right\Vert\\
&  + M_{1}\left\vert \beta_{n}-\beta_{n-1}\right\vert +M_{2}\left\vert
\alpha_{n}-\alpha_{n-1}\right\vert \\
&  =(1-\sigma_{0}(1-\beta_{n})\alpha_{n})\left\Vert y_{n}-y_{n-1}\right\Vert
+M_{1}\left\vert \beta_{n}-\beta_{n-1}\right\vert +M_{2}\left\vert \alpha
_{n}-\alpha_{n-1}\right\vert .
\end{align*}
Hence, by proceeding as in the proof of (\ref{kk}), we infer that
\begin{equation}
\left\Vert y_{n+1}-y_{n}\right\Vert \rightarrow0. \label{RN}%
\end{equation}
On the other hand, for every $n\in\mathbb{N},$ we have%
\begin{align*}
\left\Vert y_{n+1}-Ty_{n}\right\Vert  &  \leq\beta_{n}\left\Vert y_{n}%
-Ty_{n}\right\Vert +(1-\beta_{n})\left\Vert P_{Q}(S_{\alpha_{n}}(y_{n}%
))-P_{Q}(Ty_{n})\right\Vert \\
&  \leq\beta_{n}\left\Vert y_{n+1}-Ty_{n}\right\Vert +\beta_n\left\Vert y_{n+1}-y_{n}\right\Vert+\left\Vert S_{\alpha_{n}}(y_{n}%
)-Ty_{n}\right\Vert\\
&  \leq\beta_{n}\left\Vert y_{n+1}-Ty_{n}\right\Vert +\left\Vert y_{n+1}-y_{n}\right\Vert+\alpha_{n}\left\Vert
f(y_{n})-F(Ty_{n})\right\Vert.
\end{align*}
Hence, we obtain the inequality%
\[
\left\Vert y_{n+1}-Ty_{n}\right\Vert \leq\frac{1}{1-\beta_{n}}\left(
\left\Vert y_{n+1}-y_{n}\right\Vert +\alpha_{n}\sup_{m\geq0}\left\Vert
f(y_{m})-F(Ty_{m})\right\Vert \right)  ,
\]
which, combined with (\ref{RN}) and the fact that $\lim\sup_{n\rightarrow
+\infty}\beta_{n}<1$, implies that%
\begin{equation}
\left\Vert y_{n+1}-Ty_{n}\right\Vert \rightarrow0. \label{RR}%
\end{equation}
The required estimate (\ref{HM}), follows from (\ref{RN}) and (\ref{RR}).

Now we are going to apply the fundamental Lemma \ref{Lem1} to conclude that
$\{y_{n}\}$ converges strongly to $q^{\ast}.$ But first let us notice that by
proceeding as in the proof of Theorem \ref{The1} and by using (\ref{HM}) and
the fact that $\{y_{n}\}$ is bounded, we deduce that%

\[
\lim\sup_{n\rightarrow+\infty}\langle y_{n}-q^{\ast},f(q^{\ast})-F(q^{\ast
})\rangle\leq0.
\]
Combined with the estimate (\ref{HM}), the last inequality yields%
\begin{equation}
\lim\sup_{n\rightarrow+\infty}\langle Ty_{n}-q^{\ast},f(q^{\ast})-F(q^{\ast
})\rangle\leq0.\label{HK}%
\end{equation}
Finally, for every $n\in\mathbb{N},$%
\begin{align*}
\left\Vert y_{n+1}-q^{\ast}\right\Vert ^{2} &  \leq\beta_{n}\left\Vert
y_{n}-q^{\ast}\right\Vert ^{2}+(1-\beta_{n})\left\Vert P_{Q}(S_{\alpha_{n}%
}(y_{n}))-P_{Q}(q^{\ast})\right\Vert ^{2}\\
&  \leq\beta_{n}\left\Vert y_{n}-q^{\ast}\right\Vert ^{2}+(1-\beta
_{n})\left\Vert S_{\alpha_{n}}(y_{n}))-q^{\ast}\right\Vert ^{2}\\
&  =\beta_{n}\left\Vert y_{n}-q^{\ast}\right\Vert ^{2}+(1-\beta_{n}%
)[\left\Vert S_{\alpha_{n}}(y_{n}))-S_{\alpha_{n}}(q^{\ast})\right\Vert ^{2}\\
&  +2\langle S_{\alpha_{n}}(y_{n}))-S_{\alpha_{n}}(q^{\ast}),S_{\alpha_{n}%
}(q^{\ast})-q^{\ast}\rangle+\left\Vert S_{\alpha_{n}}(q^{\ast})-q^{\ast
}\right\Vert ^{2}]\\
&  =\beta_{n}\left\Vert y_{n}-q^{\ast}\right\Vert ^{2}+(1-\beta_{n}%
)[\left\Vert S_{\alpha_{n}}(y_{n}))-S_{\alpha_{n}}(q^{\ast})\right\Vert ^{2}\\
&  +2\langle S_{\alpha_{n}}(y_{n}))-q^{\ast},S_{\alpha_{n}}(q^{\ast})-q^{\ast
}\rangle-\left\Vert S_{\alpha_{n}}(q^{\ast})-q^{\ast}\right\Vert ^{2}]\\
&  \leq\beta_{n}\left\Vert y_{n}-q^{\ast}\right\Vert ^{2}+
(1-\beta_{n})[(1-\alpha_{n}\sigma_{0})^{2}\left\Vert y_{n}-q^{\ast}\right\Vert
^{2}\\
&  +2\alpha_{n}^{2}\langle f(y_{n})-F(Ty_{n}),f(q^{\ast})-F(q^{\ast}%
)\rangle+2\alpha_{n}\langle Ty_{n}-q^{\ast},f(q^{\ast})-F(q^{\ast})\rangle]\\
&  \leq(1-\gamma_{n})\left\Vert y_{n}-q^{\ast}\right\Vert ^{2}+2\alpha
_{n}(1-\beta_{n})\left[  \langle Ty_{n}-q^{\ast},f(q^{\ast})-F(q^{\ast
})\rangle+C\alpha_{n}\right]  \\
&  =(1-\gamma_{n})\left\Vert y_{n}-q^{\ast}\right\Vert ^{2}+\gamma_{n}r_{n}%
\end{align*}
where $C>0$ is a constant independent of $n,$ $\gamma_{n}=2\sigma_{0}%
(1-\beta_{n})\alpha_{n}$ and
$$\ r_{n}=\frac{1}{\sigma_{0}}\left(  \langle
Ty_{n}-q^{\ast},f(q^{\ast})-F(q^{\ast})\rangle+C\alpha_{n}\right)  .$$
Using
the estimate (\ref{HK}), we obtain $\lim\sup r_{n}\leq0;$ hence, by applying
Lemma \ref{Lem1} we conclude as previously that the sequence $\{y_{n}\}$
converges strongly to $q^{\ast}.$ This completes the proof of Theorem
\ref{The2}.
\end{proof}

\section{The study of the limit case $\mu=\alpha$}

Throughout this section, we assume that $\mu=\alpha.$ In this limit case, the operator $F-f$
is monotone but not necessary strongly monotone; so the uniqueness of the
solution of the variational inequality problem (VIP) is no long assured. Let us 
assume that (VIP) has at least one solution. We denote by $S_{VIP}$
the set of the solutions of (VIP). The following theorem provides a method to
approximate a particular element of the set $S_{VIP}.$

\begin{theorem}
Assume that the sequences $\{\alpha_{n}\},~\{\beta_{n}\}$ and $\{e_{n}\}$
satisfy the same assumptions as in Theorem. Then, for every $\varepsilon>0$
and $x_{0}\in Q$, the sequence $\{x_{n}^{\varepsilon}\}$ defined by the
recursive formula%
\[
x_{n+1}^{\varepsilon}=\beta_{n}x_{n}^{\varepsilon}+(1-\beta_{n})P_{Q}\left(
\alpha_{n}f(x_{n}^{\varepsilon})+((1-\alpha_{n}\varepsilon)I-\alpha
_{n}F)Tx_{n}^{\varepsilon}+e_{n}\right)  ,~n\geq0,
\]
converges strongly to $q^{\varepsilon}$ the unique solution of the variational
inequality problem%
\begin{equation}
\text{Find }q\in C\text{ such that }\langle F(q)+\varepsilon q-f(q),x-q\rangle
\geq0\text{ for all }x\in C.\tag{VIP$_\varepsilon$}%
\end{equation}
Moreover, the set $S_{VIP}$ is closed and convex and $q^{\varepsilon}$
converges strongly as $\varepsilon\rightarrow0$ to the nearest element of
$S_{VIP}$ to the origin.
\end{theorem}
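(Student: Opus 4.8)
The plan is to reduce the convergence statement to Theorem \ref{The2} by perturbing the operator $F$, and then to study the behaviour of the solutions $q^{\varepsilon}$ as $\varepsilon\to0^{+}$ by means of Minty's reformulation of the variational inequality together with a minimal-norm selection argument. First I would introduce $F_{\varepsilon}:=F+\varepsilon I$. A direct estimate shows that $F_{\varepsilon}$ is Lipschitzian with coefficient $\kappa+\varepsilon$ and strongly monotone with coefficient $\alpha+\varepsilon$ (recall that in this limit case the strong monotonicity coefficient of $F$ equals the Lipschitz coefficient $\alpha$ of $f$), so that the standing hypothesis ``Lipschitz coefficient of $f$ strictly less than strong monotonicity coefficient of $F_{\varepsilon}$'' holds for every $\varepsilon>0$. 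Since $(I-\alpha_{n}F_{\varepsilon})=((1-\alpha_{n}\varepsilon)I-\alpha_{n}F)$, the recursion defining $\{x_{n}^{\varepsilon}\}$ is precisely the scheme (HPA) attached to the data $(f,F_{\varepsilon},T)$. As the hypotheses (i)--(iii) bear only on $\{\alpha_{n}\}$, $\{\beta_{n}\}$, $\{e_{n}\}$ and not on the chosen monotone operator, Theorem \ref{The2} applies verbatim and gives strong convergence of $\{x_{n}^{\varepsilon}\}$ to the unique solution of the variational inequality for $F_{\varepsilon}-f=g+\varepsilon I$, which is exactly $q^{\varepsilon}$, the unique solution of (VIP$_\varepsilon$).

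Next I would record the structure of $S_{VIP}$. Writing $g:=F-f$, which here is monotone and (Lipschitz) continuous, Minty's lemma gives that $q\in S_{VIP}$ if and only if $q\in C$ and $\langle g(x),x-q\rangle\ge0$ for all $x\in C$. Hence $S_{VIP}=C\cap\bigcap_{x\in C}\{q:\langle g(x),x-q\rangle\ge0\}$ is the intersection of $C$ (closed and convex, being the fixed point set of the nonexpansive map $T$) with a family of closed half-spaces, so it is closed and convex; in particular the projection $\bar{q}:=P_{S_{VIP}}(0)$ of the origin is well defined.

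The core of the argument is the asymptotic analysis of $q^{\varepsilon}$. Fix $p\in S_{VIP}$. Testing (VIP$_\varepsilon$) at $x=p$ yields $\langle g(q^{\varepsilon})+\varepsilon q^{\varepsilon},p-q^{\varepsilon}\rangle\ge0$, while monotonicity of $g$ and $p\in S_{VIP}$ give $\langle g(q^{\varepsilon}),q^{\varepsilon}-p\rangle\ge\langle g(p),q^{\varepsilon}-p\rangle\ge0$. Combining the two produces the self-improving estimate $\langle q^{\varepsilon},q^{\varepsilon}-p\rangle\le0$ for every $p\in S_{VIP}$, whence $\norm{q^{\varepsilon}}\le\norm{p}$; in particular $\{q^{\varepsilon}\}$ is bounded and $\norm{q^{\varepsilon}}\le\norm{\bar{q}}$. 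It then suffices to identify every weak cluster point. Let $\varepsilon_{n}\to0^{+}$ with $q^{\varepsilon_{n}}\rightharpoonup\hat{q}$; since each $q^{\varepsilon_{n}}\in C$ and $C$ is weakly closed, $\hat{q}\in C$. Rewriting (VIP$_\varepsilon$) in Minty form, $\langle g(x)+\varepsilon x,x-q^{\varepsilon}\rangle\ge0$ for all $x\in C$; fixing $x$ and letting $\varepsilon_{n}\to0$ (strong $\times$ weak convergence of the two factors) gives $\langle g(x),x-\hat{q}\rangle\ge0$ for all $x\in C$, so $\hat{q}\in S_{VIP}$. Taking $p=\hat{q}$ in the estimate yields $\norm{q^{\varepsilon_{n}}}^{2}\le\langle q^{\varepsilon_{n}},\hat{q}\rangle$, and passing to the limit together with weak lower semicontinuity of the norm forces $\norm{q^{\varepsilon_{n}}}\to\norm{\hat{q}}$, hence strong convergence $q^{\varepsilon_{n}}\to\hat{q}$. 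Finally, letting $\varepsilon\to0$ in $\norm{q^{\varepsilon}}\le\norm{p}$ gives $\norm{\hat{q}}\le\norm{p}$ for all $p\in S_{VIP}$; since $\hat{q}\in S_{VIP}$, it is the minimal-norm element, i.e. $\hat{q}=\bar{q}$. As the limit does not depend on the subsequence, the whole family converges strongly, $q^{\varepsilon}\to\bar{q}$ as $\varepsilon\to0^{+}$.

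The step I expect to be the main obstacle is exactly the one where strong monotonicity is no longer available: in Theorems \ref{The1} and \ref{The2} the contraction constant $\sigma_{0}$ came from $\eta-\alpha>0$ and drove the limit identification, whereas here $g=F-f$ is merely monotone, so that mechanism is gone. What replaces it is twofold: the Minty reformulation, which converts monotonicity plus continuity into a property stable under weak limits (thereby certifying $\hat{q}\in S_{VIP}$), and the inequality $\langle q^{\varepsilon},q^{\varepsilon}-p\rangle\le0$, which simultaneously yields uniform boundedness, strong convergence of the chosen subsequence, and the minimal-norm characterization of the limit. Verifying the legitimacy of the strong-times-weak passage to the limit and the weak closedness of $C$ are the only genuinely analytic points; the rest is bookkeeping.
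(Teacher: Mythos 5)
Your argument is correct, and its first half (rewriting the recursion as the scheme (HPA) for the data $(f,F_{\varepsilon},T)$ with $F_{\varepsilon}=F+\varepsilon I$, checking that $F_{\varepsilon}$ is $(\alpha+\varepsilon)$-strongly monotone and $(\kappa+\varepsilon)$-Lipschitzian, and invoking Theorem \ref{The2}) is exactly what the paper does. For the second half you take a genuinely different route. The paper encodes the constraint via the subdifferential of the indicator function of $C$, cites Rockafellar to conclude that $A=F-f+\partial\delta_{C}$ is maximal monotone (so that $S_{VIP}=A^{-1}(0)$ is closed and convex), observes that $q^{\varepsilon}=(I+\tfrac{1}{\varepsilon}A)^{-1}(0)$, and then appeals to the Bruck--Morosanu lemma on the convergence of resolvents $(I+tA)^{-1}u\rightarrow P_{A^{-1}(0)}(u)$ as $t\rightarrow+\infty$. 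You instead stay entirely at the level of variational inequalities: Minty's lemma exhibits $S_{VIP}$ as the intersection of $C$ with closed half-spaces (giving closedness and convexity directly), and the key inequality $\langle q^{\varepsilon},q^{\varepsilon}-p\rangle\leq 0$ for every $p\in S_{VIP}$ simultaneously yields boundedness, the identification of weak cluster points as the minimal-norm element, and the upgrade from weak to strong subsequential convergence via norm convergence. Your version is more elementary and self-contained --- it avoids maximal monotone operator theory and the external resolvent-convergence lemma --- at the modest cost of needing both directions of Minty's lemma, which requires hemicontinuity of $g=F-f$; this holds here since $g$ is Lipschitzian. The paper's version is shorter but rests on two nontrivial cited results. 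Both arguments are sound and reach the same conclusion.
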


\begin{proof}
For every $\varepsilon>0,$ the operator $F_{\varepsilon}:=F+\varepsilon I$ is
$\mu+\varepsilon$ strongly monotone and $\kappa+\varepsilon$ Lipschitizian.
Since $\mu+\varepsilon>\alpha,$ the first part of this theorem follows
immediately from Theorem \ref{The2}.

Let $\delta_{C}:C\rightarrow\mathcal{H}$ be the indicator function associated
to the closed, convex and nonempty subset $C.$ We recall that $\delta_{C\text{
}}$ is given by%
\[
\delta_{C}(x)=\left\{
\begin{array}
[l]{ll}%
0 & ,x\in C\\
+\infty & ,x\notin C
\end{array}
\right.
\]
It is well-known that  $\delta_{C}$  is a proper, lower semi-continuous, and convex function. Hence, its sub-gradient $\partial\delta_{C}$ is a maximal and monotone operator with domain equal to $C$.  We recall that, for every $x\in C$,
\[
\partial\delta_{C}(x)=\{u\in\mathcal{H}:~\langle u,y-x\rangle\leq0\}.
\]
It is then easily seen that the set $S_{VIP}$ is equal to $A^{-1}(0)$ the
set of zeros of the operator $A:=F-f+\delta_{C}.$ From \cite{Roc}, the operator $A$ is maximal and monotone;
therefore, $S_{VIP}$ is a closed and convex subset of $\mathcal{H}.$ On the
other hand, the unique solution $q^{\varepsilon}$ of (VIP$_{\varepsilon}$)
satisfies the relation
\[
-\left(  F(q^{\varepsilon})+\varepsilon q^{\varepsilon}-f(q^{\varepsilon
})\right)  \in\delta_{C}(q^{\varepsilon}),
\]
which is equivalent to
\[
0\in q^{\varepsilon}+\frac{1}{\varepsilon}A(q^{\varepsilon}),
\]
since $\frac{1}{\varepsilon}\delta_{C}(q^{\varepsilon})=\delta_{C}%
(q^{\varepsilon}).$ Therefore, $q^{\varepsilon}=J_{\frac{1}{\varepsilon}}(0),$
where for every $\lambda>0$, $J_{\lambda}=(I+\lambda A)^{-1}$ is the resolvant
of A (for more details, see the pioneer paper \cite{MIN} of Minty). Hence,
from the following lemma due to Bruck \cite{BRU} and Morosanu \cite{MOR}
\begin{lemma}
Let $A:D(A)\subset\mathcal{H}\rightarrow2^{\mathcal{H}}$ be a maximal monotone
operator with $A^{-1}(0)\neq\varnothing.$ Then for any $u\in\mathcal{H},$
$(I+tA)^{-1}u\rightarrow P_{A^{-1}(0)}(u)$ as $t\rightarrow+\infty.$
\end{lemma}
we deduce that $q^{\varepsilon}$ converges
strongly as $\varepsilon\rightarrow0$ to $P_{A^{-1}(0)}=P_{S_{VIP}}(0)$ which
is the element of $S_{VIP}$ with minimal norm.
\end{proof}

\begin{remark}
From the previous theorem, we expect, but we don't yet have the justification,
that under some appropriate assumptions on the real sequences \ $\{\alpha
_{n}\},~\{\beta_{n}\}$ and $\{\varepsilon_{n}\}$,  the sequences $\{x_{n}\}$
generated by the iterative process
\[
x_{n+1}=\beta_{n}x_{n}+(1-\beta_{n})P_{Q}\left(  \alpha_{n}f(x_{n}%
)+((1-\alpha_{n}\varepsilon_{n})I-\alpha_{n}F)Tx_{n}\right)  ,~n\geq0,
\]
where $x_{0}$ is an arbitrary element of $Q$, converge strongly in
$\mathcal{H}$ to $u^{\ast}=P_{S_{VIP}}(0).$ Let us notice that Reich and Xu in
\ \cite{RXU} had raised a similar open question related to the constrained least
squares problem
\end{remark}

%\section{Acknowledgment}
%This work was supported by the Deanship of Scientific Research, King %Faisal University, Saudi Arabia [grant number NA00044].

\end{document}